\date{\today}
\title{Minimal Darboux transformations}
\author{U.~Hertrich-Jeromin}
\address[Udo Hertrich-Jeromin]{%
   Vienna University of Technology,
   Wiedner Hauptstra\ss{}e 8--10/104, A-1040 Vienna, Austria
}
\email{udo.hertrich-jeromin@tuwien.ac.at}
\author{A.~Honda}
\address[Atsufumi Honda]{%
   National Institute of Technology, Miyakonojo College, 
   Yoshio, Miyakonojo 885-8567, Japan
}
\email{atsufumi@cc.miyakonojo-nct.ac.jp}
\thanks{%
This work has been partially supported by
the Austrian Science Fund (FWF) and
the Japan Society for the Promotion of Science (JSPS)
through the FWF/JSPS Joint Project grant I1671-N26
``Transformations and Singularities''.
}
\subjclass[2010]{%
Primary
53A10,
37K35; 
Secondary
53C42, 
53A30,
37K25,
34M45
}
\keywords{%
 Minimal surface,
 Darboux transformation,
 Christoffel transformation,
 Goursat transformation,
 Bianchi permutability,
 Riccati equation,
 flat front,
 curved flat,
 hyperbolic geometry.
}
\theoremstyle{plain}
 \newtheorem{theorem}{Theorem}[section]
 \newtheorem*{fact*}{Fact}
 \newtheorem{lemma}[theorem]{Lemma}
 \newtheorem{corollary}[theorem]{Corollary}
 \theoremstyle{remark}
 \newtheorem{remark}[theorem]{Remark}
 \newtheorem*{acknowledgements}{Acknowledgements}
\numberwithin{equation}{section}
\newcommand{\R}{\boldsymbol{R}}
\newcommand{\C}{\boldsymbol{C}}
\newcommand{\bmath}[1]{\mbox{\boldmath $#1$}}
\renewcommand{\H}{{\bmath{H}}}
\newcommand{\inner}[2]{\left\langle{#1},{#2}\right\rangle}
\renewcommand{\Re}{\operatorname{Re}}
\renewcommand{\Im}{\operatorname{Im}}
\begin{document}
\begin{abstract}
We derive a permutability theorem for the
Christoffel, Goursat and Darboux transformations
of isothermic surfaces.
As a consequence we obtain a simple proof of a relation
between Darboux pairs of minimal surfaces in Euclidean
space, curved flats in the $2$-sphere and flat fronts
in hyperbolic space.
\end{abstract}
\maketitle

\section*{Introduction}

Recently, Mart\'{\i}nez-Roitman-Tenenblat \cite{MRT}
obtained a formula for the Weierstrass data of minimal
surfaces in Euclidean $3$-space $\R^3$ that are related
by a Ribaucour transformation \cite[Cor 3.4]{MRT},
which was used to analyze the asymptotic behavior and
the umbilic points of minimal surfaces
obtained by Ribaucour transformations.
To derive this formula,
the authors use the \emph{cyclographic projection}, 
a correspondence between (oriented) spheres in $\R^3$
and the Minkowski $4$-space $\R^4_1$.
In this way, they obtain a relation between
flat fronts in hyperbolic $3$-space
and minimal surface in $\R^3$.

We aim to provide a simpler, virtually computation-free
account of these relations,
based on a permutability theorem for isothermic surfaces.

In \cite{hj97}, see also \cite[Sect 5.3]{IMDG},
the first author extended the Goursat transformation
---
classically known for minimal surfaces,
 see \cite{Goursat1887} 
---
to the class of isothermic surfaces and argued that
the classical Enneper-Weierstrass representation can
be regarded as a special case of this ``extended''
Goursat transformation,
see \cite[\S5.3.12]{IMDG}.
As our main Theorem \ref{thm:main},
we derive a new \emph{permutability theorem} for the
transformations of isothermic surfaces, involving this
Goursat transformation.

The aforementioned relation between
the Weierstrass data of minimal surfaces related by
a Darboux tranformation is then obtained as
a degenerate case of our permutability theorem,
see Corollary \ref{fact:to_prove}.

For the relation to flat fronts in hyperbolic space
we draw on the relation between Darboux pairs of
isothermic surfaces and curved flats in the
space of point pairs in the conformal $3$-sphere,
see \cite{bjpp97} or \cite[\S3.3.2]{IMDG}.
By the well known Bianchi permutability theorem for
the Christoffel and Darboux transformations,
see \cite[\S5]{Bianchi1904},
a Darboux pair of minimal surfaces yields a curved flat
in the $2$-sphere, thus a pair of hyperbolic Gauss maps
of a parallel family of flat fronts in hyperbolic space,
cf \cite[Thms 3.2 \& 3.3]{MRT},
or \cite[Sect 3]{bjr10}.

\section{Preliminaries}
\label{sec:prelim}

Our main focus will be on minimal surfaces in the Euclidean
space $\R^3$, regarded as special \emph{isothermic surfaces},
that is,
surfaces that admit (local) conformal curvature line
coordinates around every non-umbilic point.
Thus given a conformal parametrization
$$
  f:M^2\to\R^3
$$
of an isothermic surface over a Riemann surface $M^2$,
it will often be useful employ
\emph{conformal curvature line parameters}
$
  (u,v):M^2\to\R^2,
$
where the first and second fundamental forms of the
surface become particularly simple:
$$
  \langle df,df\rangle = E\,(du^2+dv^2)
   \enspace{\rm and}\enspace
  \langle df,dn\rangle = -E\,(\kappa_1du^2+\kappa_2dv^2).
$$
Note that we do not exclude umbilics,
in fact,
we will need to consider the unit sphere as an isothermic
surface, with any conformal coordinates providing a
conformal ``curvature line parametrization''.

To facilitate our analysis we consider Euclidean $3$-space
\[
  \R^3\cong\Im\H=\{ x_1i+x_2j+x_3k \,;\, x_1, x_2, x_3 \in \R\}
\]
as the set of imaginary quaternions,
thereby obtaining the Hamiltonian product
$$
  xy = -\inner{x}{y} + x \times y,
$$
of vectors as an additional algebraic structure.
However, we remark that we can replace the Hamiltonian product
by the Clifford product with only minor adjustments,
see \cite{Burstall2006} or \cite[Chapter 8]{IMDG};
see also Remark \ref{rem:CliffordAlg}.

A key argument of our analysis will be the conformal invariance
of the notion of an isothermic surface:
in particular, if $f:M^2\to\R^3$ is isothermic then so is any
of its \emph{M\"obius transforms}
$$
  \mu\circ f:M^2\to\R^3, \enspace
  \mu(x) = (ax+b)(cx+d)^{-1}.
$$

\subsection{Christoffel's transformation}

Given an isothermic surface $f: M^2 \rightarrow \R^3$
there is locally a second surface
$f^*: M^2 \rightarrow \R^3$
such that
(cf.\ \cite{Christoffel1867} and \cite[\S5.2.1]{IMDG}):
\begin{itemize}
\item
 $f$ and $f^*$ have parallel tangent planes at corresponding
 points, $df(T_pM^2)=df^*(T_pM^2)$ for each $p\in M^2$;
\item
 the shape operators $A$, $A^*$ of $f$ and $f^*$ commute
 (i.e., the curvature directions 
 coincide in $T_pM^2$; cf.\ \cite[Lemma 3.1.6]{IMDG}),
 and $\det A^*\det A\leq0$;
\item
 $f$ and $f^\ast$ induce conformally equivalent metrics.
\end{itemize}
Such a surface $C(f):=f^*$ is called
a \emph{Christoffel transform}
or \emph{Christoffel dual\/}
of $f$.
Away from umbilics,
a Christoffel dual is unique up to homothety:
given conformal curvature line coordinates $(u,v)$ for $f$,
a Christoffel dual $f^*$ is obtained by integrating
\emph{Christoffel's formula}
\[
  df^* = \frac2{E(\kappa_1-\kappa_2)}(dn+H\,df)
  = -\frac{1}{E}(f_udu-f_vdv).
\]
Alternatively, $f^*$ may be characterized by the fact
that
$$
  q := df\,df^* = (du^2-dv^2) + 2n\,dudv
$$
defines a holomorphic quadratic differential when
identifying the trivial normal bundle of $f$ in $\H$
with $\C$ via $n\mapsto i$.
This demonstrates the symmetry of the Christoffel duality,
that is, the fact that it can reasonably be called a
``duality'',
cf \cite[Thm 4]{hjp97} and \cite[\S5.2.6]{IMDG}:
$$
  f^{**} = f.
$$

As the Gauss map $n:M^2\to S^2$ of a minimal surface
$f:M^2\to\R^3$ is conformal,
it yields a Christoffel dual of $f$ as an isothermic surface,
$f^*=n$.
In fact, minimal surfaces in Euclidean space
can be \emph{characterized}
as isothermic surfaces whose Gauss map yields
a Chrstoffel dual,
see \cite{hjp97}, \cite[\S5.2.10]{IMDG}.
In order to ``reverse engineer'' a minimal surface $f=n^*$
from its Gauss map $n$ via Christoffel duality,
more input data is required:
prescribing a holomorphic quadratic differential $q$ on $M^2$
fixes a ``curvature line net'' on the spherical patch
$n(M^2)\subset S^2$,
thus fixes a notion of conformal ``curvature line parameters''
that allow to use Christoffel's formula to uniquely determine
a Christoffel dual
$$
  f = n^* = -\int\frac{n_udu-n_vdv}{E}
$$
from $n$.
Alternatively, other holomorphic data can be prescribed
on the Riemann surface $M^2$
to obtain the holomorphic Hopf differential of the
minimal surface $f$ and hence the minimal surface via
Christoffel's formula,
cf.\ \cite[\S5.2.11 and \S5.3.12]{IMDG}.

\subsection{Goursat's transformation}

Describing a minimal surface $f=\Re\Psi$ as the real part
of a holomorphic null curve $\Psi:M^2\to\C^3$ it is clear
that any orthogonal transformation $A\in O(3,\C)$ yields
a new minimal surface via
$$
  \tilde f := \Re\tilde\Psi
   \enspace{\rm with}\enspace
  \tilde\Psi := A\Psi,
$$
a \emph{Goursat transform} of $f$,
cf.\ \cite{Goursat1887},
\cite[\S83]{Nitsche}.
It is well know that the effect of such a Goursat transform
on the Gauss map is a M\"obius transformation,
thereby suggesting a extension of the notion
to isothermic surfaces,
see \cite{hj97} or \cite[Def 5.3.3]{IMDG}:
given an isothermic surface $f$
and a M\"obius transformation $\mu$ of $\R^3\cup\{\infty\}$,
the (locally defined) surface
\[
  \tilde{f}:=(\mu \circ f^*)^*
\]
will be called a \emph{Goursat transform} of $f$.
We also write $G_{\mu}(f):=\tilde{f}$.
In particular, for $\mu(x)=(x-z)^{-1}$ with $z\in\Im\H$,
we have (cf.\ \cite[Lemma 5.3.10]{IMDG})
\begin{equation}\label{eq:Goursat}
  d\!\tilde{f}=-(f^*-z)\,df(f^*-z).
\end{equation}

\subsection{Darboux's transformation}

A pair of surfaces $f,\hat{f}:M^2\to\R^3$
is called a \emph{Ribaucour pair} if
\begin{itemize}
\item
 for each $p\in M^2$, there is a $2$-sphere $s(p)$
 that has first order contact with both $f$ at $f(p)$
 and $\hat{f}$ at $\hat f(p)$;
 at $p$, and
\item
 the shape operators of $f$ and $\hat{f}$ commute;
\end{itemize}
and a Ribaucour pair is called a \emph{Darboux pair}
(cf.\ \cite{Darboux1899})
if, additionally,
\begin{itemize}
\item
 their induced metrics are conformally equivalent.
\end{itemize}
Given an isothermic surface $f:M^2\to\R^3$
and a parameter $t\in\R\setminus\{0\}$,
a \emph{Darboux transform} $\hat f$ of $f$ is obtained
by integrating the following completely integrable
Riccati equation
(cf.\ \cite{hjp97} or \cite[\S5.4.8f]{IMDG})
\begin{equation}\label{eq:Riccati1}
  d\!\hat{f} = t\,(\hat{f}-f)\,df^*(\hat{f}-f).
\end{equation}
As a consequence, an isothermic surface admits,
locally, a $(1+3)$-parameter family of Darboux transforms,
depending on the ``spectral parameter'' $t\in\R$ and
the choice of an initial point of the transform.
We note that the Darboux transformation is invariant
under M\"{o}bius transformations,
that is,
if $\mu$ is a M\"{o}bius transformation then
\begin{equation}\label{eq:mu_D}
  \mu(D_{t}(f)) = D_{t}(\mu(f)).
\end{equation}

Note that the Riccati equation \eqref{eq:Riccati1} breaks
the symmetry of the definition of the Darboux transformation:
as the Darboux transformation is involutive,
$\hat f=D_t(f)$ iff $f=D_t(\hat f)$,
we also have a corresponding Riccati equation
\begin{equation}\label{eq:Riccati_d}
  df = t\,(\hat{f}-f)\,d\!\hat{f}^*(\hat{f}-f).
\end{equation}

\subsection{Bianchi's permutability theorem}

The Christoffel and Darboux transformations commute,
that is, the following diagram commutes,
see \cite[\S5]{Bianchi1904} or \cite[Thm 5.6.3]{IMDG}:
\par
\begin{center}
  \resizebox{4.1cm}{!}{\includegraphics{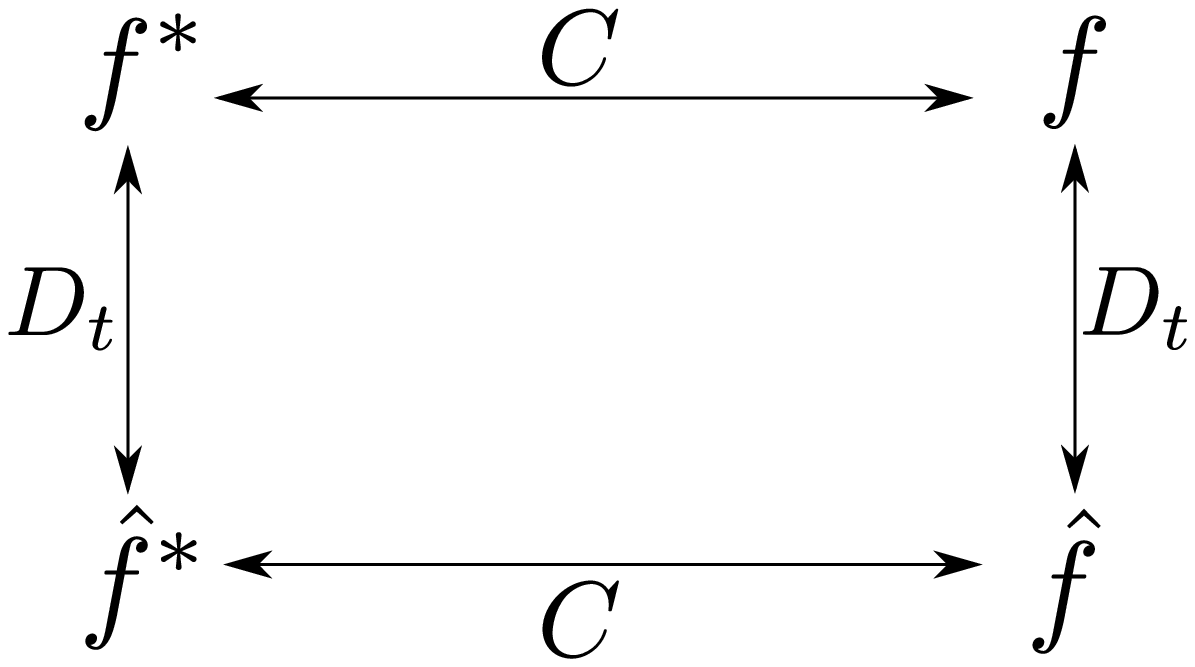}}\\
\end{center}
\noindent
More precisely, given a Christoffel dual $f^*$ and
a Darboux transform $\hat f=D_t(f)$ of an isothermic
surface $f:M^2\to\R^3=\Im\H$,
the surface
\begin{equation}\label{eq:Riccati_ori}
  \hat{f}^*:= f^* + \frac1{t} (\hat{f}-f)^{-1}
\end{equation}
is a Christoffel dual of $\hat{f}$
and a $D_{t}$-transform of $f^*$ at the same time.

Note that this permutability scheme occurs as a limiting
case of the commonly known Bianchi permutability scheme
of \cite[\S3]{Bianchi1904} (cf.\ \cite[\S5.6.6]{IMDG})
for the Darboux transformations when one of the
spectral parameters vanishes.

\section{Permutability theorem}
\label{sec:permutability}

We are now prepared to state and prove our main
permutability theorem for the Christoffel,
Goursat and Darboux transformations:

\begin{theorem}\label{thm:main}
Given
a Darboux transform $\hat f=D_t(f)$ and
a Goursat transform $\tilde f=G_\mu(f)$
of an isothermic surface $f:M^2\to\R^3$,
the surface
\begin{equation}\label{eq:GD}
  G_{\mu}(\hat{f}) :
  = G_{\mu}(f) + \frac{1}{t}\,
    \left(\mu\circ\hat{f}^*-\mu\circ f^*\right)^{-1}
\end{equation}
is simultaneously a Goursat transform of $\hat{f}$
and a $D_{t}$-transform of $\tilde{f}$.
Moreover the following diagram commutes:
\par
\begin{center}
  \resizebox{8cm}{!}{\includegraphics{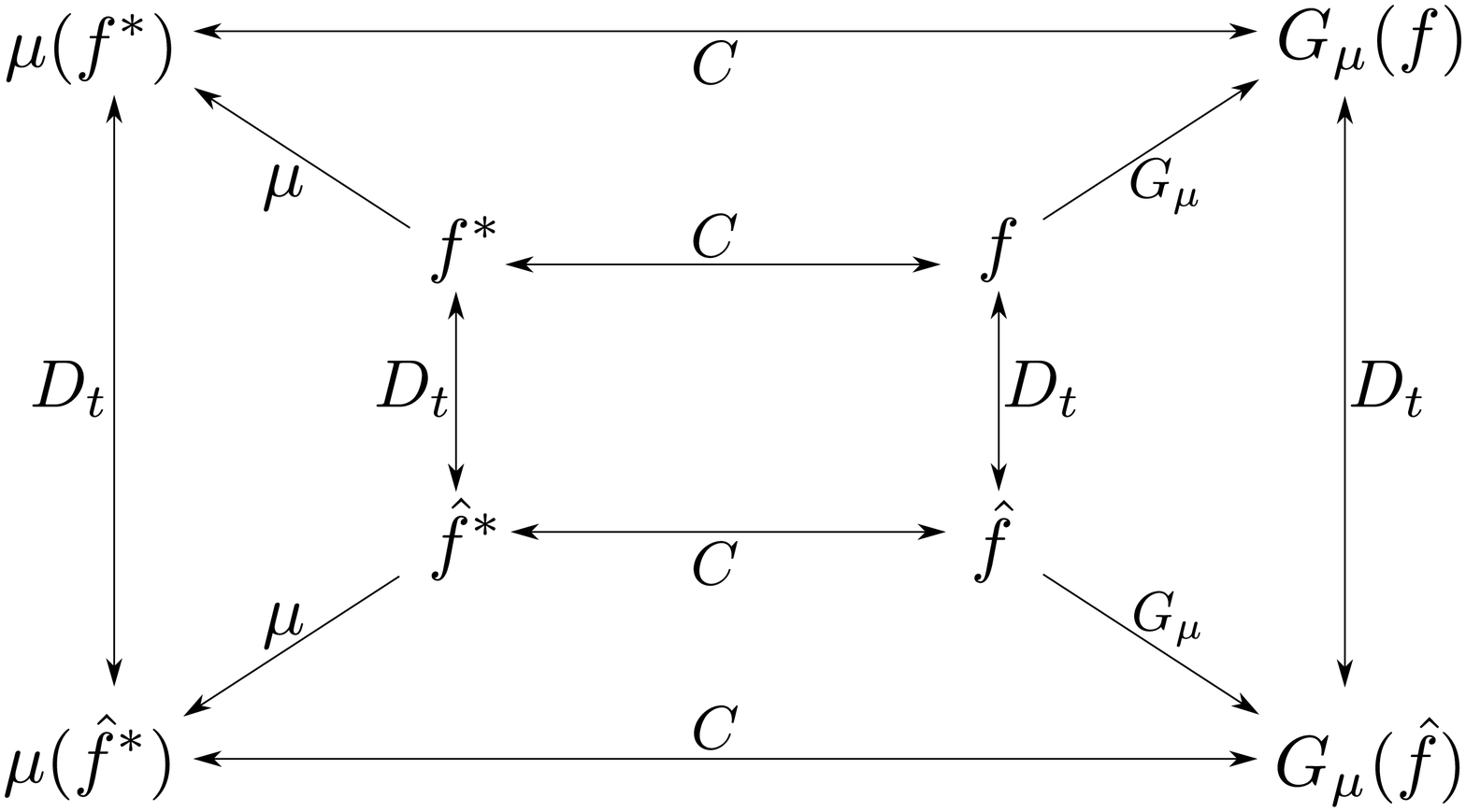}}\\
\end{center}
\end{theorem}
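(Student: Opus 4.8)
The plan is to prove the theorem by reducing it to the already-established Bianchi permutability theorem \eqref{eq:Riccati_ori} via the definition of the Goursat transformation as a ``Christoffel-conjugated'' M\"obius transformation, $G_\mu(f)=(\mu\circ f^*)^*$. First I would record the two ingredients that make this reduction work: the M\"obius invariance of the Darboux transformation \eqref{eq:mu_D}, and the fact (from Bianchi's theorem) that if $\hat f=D_t(f)$ with Christoffel duals $f^*,\hat f^*$, then $\hat f^*=D_t(f^*)$ with the explicit gluing \eqref{eq:Riccati_ori}. Applying $\mu$ to the Darboux pair $f^*, \hat f^*$ and using \eqref{eq:mu_D} gives that $\mu\circ\hat f^*=D_t(\mu\circ f^*)$; hence $\mu\circ f^*$ and $\mu\circ\hat f^*$ form a Darboux pair, to which I can again apply Bianchi's permutability \eqref{eq:Riccati_ori}. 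This produces a surface that is simultaneously a Christoffel dual of $\mu\circ\hat f^*$ and a $D_t$-transform of $(\mu\circ f^*)^*=G_\mu(f)=\tilde f$, namely
\[
  (\mu\circ\hat f^*)^* + \tfrac1t\,(\mu\circ\hat f^*-\mu\circ f^*)^{-1}.
\]
Since $(\mu\circ\hat f^*)^*=G_\mu(\hat f)$ by definition of the Goursat transform, this is exactly the right-hand side of \eqref{eq:GD}, and it is a $D_t$-transform of $\tilde f$, which is one of the two assertions.

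Next I would verify the remaining assertion, that $G_\mu(\hat f)$ is genuinely a Goursat transform of $\hat f$ — but this is immediate from the definition $G_\mu(\hat f):=(\mu\circ\hat f^*)^*$, provided one checks that $\mu\circ\hat f^*$ is isothermic (it is, being a M\"obius transform of the isothermic surface $\hat f^*$) and that its Christoffel dual is well defined. The subtle point here, and the one I expect to be the main obstacle, is the \emph{consistency of the choices of Christoffel duals and integration constants}: the Christoffel dual is only unique up to homothety and translation away from umbilics, and the Darboux transform depends on an initial condition, so one must check that the particular $\hat f^*$ appearing in Bianchi's formula \eqref{eq:Riccati_ori} is compatible with the $\hat f^*$ used to define $G_\mu(\hat f)$, and that the additive constant $\tfrac1t(\hat f-f)^{-1}$ in \eqref{eq:Riccati_ori} is precisely what makes the two descriptions of $G_\mu(\hat f)$ agree rather than differ by a constant. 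I would handle this by fixing $f^*$ first, then \emph{defining} $\hat f^*$ by \eqref{eq:Riccati_ori}, then defining $\tilde f=G_\mu(f)=(\mu\circ f^*)^*$ with a chosen constant, and finally showing that the $D_t$-transform of $\tilde f$ obtained by integrating the Riccati equation \eqref{eq:Riccati1} starting from the appropriate point coincides with $G_\mu(\hat f)=(\mu\circ\hat f^*)^*$ — the uniqueness of solutions to the Riccati equation does the rest once the initial values match.

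Finally, the commutativity of the diagram is essentially a restatement of what has been proved: the left square is Bianchi's permutability for $f, f^*, \hat f, \hat f^*$; the outer/right part expresses that $\mu$ carries the Darboux pair $(f^*,\hat f^*)$ to the Darboux pair $(\mu\circ f^*,\mu\circ\hat f^*)$ and that dualizing the latter pair yields the Goursat transforms $\tilde f=G_\mu(f)$ and $G_\mu(\hat f)$ related by $D_t$ via \eqref{eq:GD}. I would close by remarking that the classical Bianchi permutability \eqref{eq:Riccati_ori} is recovered as the special case $\mu=\mathrm{id}$, and — anticipating Corollary \ref{fact:to_prove} — that taking $f$ minimal (so $f^*=n$ is the Gauss map) and $\mu$ suitably degenerate turns \eqref{eq:GD} into the Weierstrass-data formula for Darboux-related minimal surfaces. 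The whole argument is, as promised in the introduction, virtually computation-free: the only formula manipulated is \eqref{eq:Riccati_ori}, and only to the extent of substituting $\mu\circ f^*$ for $f^*$.
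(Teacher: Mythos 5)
Your argument is correct and follows essentially the same route as the paper: Bianchi permutability for $(f,\hat f)$, M\"obius invariance of $D_t$ to transport the dual Darboux pair to $(\mu\circ f^*,\mu\circ\hat f^*)$, and a second application of Bianchi to that pair, identified via the definition $G_\mu(\,\cdot\,)=(\mu\circ(\,\cdot\,)^*)^*$. Note only that the first summand in your displayed formula should read $(\mu\circ f^*)^*=G_\mu(f)$ rather than $(\mu\circ\hat f^*)^*$ --- Bianchi expresses the Christoffel dual of $\mu\circ\hat f^*$ as $(\mu\circ f^*)^*$ plus the correction term, which then equals $(\mu\circ\hat f^*)^*=G_\mu(\hat f)$ --- though your surrounding text makes the intended identification clear.
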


\begin{proof}
Let $f$ be isothermic and $G_\mu(f)$ and $D_t(f)$
a Goursat resp.\ Darboux transform of $f$.
The Goursat transform $G_\mu(f)$ is given by intertwining
the Christoffel duality with a M\"obius transformation,
$$
  G_\mu(f) = (\mu\circ f^*)^*.
$$
Thus the upper quadrilateral of our diagram reflects
the definition of the Goursat transformation.
Then, by Bianchi's permutability theorem,
\eqref{eq:Riccati_ori} can serve to complete
the inner quadrilateral of the diagram,
$$
  \hat{f}^* = f^* + \frac1{t} (\hat{f}-f)^{-1}.
$$
Now the M\"{o}bius invariance of $D_t$ (cf.\ \eqref{eq:mu_D})
yields the left quadrilateral in the diagram,
\[
  D_t(\mu(f^*))=\mu(D_t(f^*))=\mu(\hat{f}^*),
\]
and another application of Bianchi's permutability theorem
serves to obtain
$$
  f^\dagger := G_{\mu}(f)
   + \frac{1}{t}\,(\mu\circ\hat{f}^*-\mu\circ f^*)^{-1}
$$
as a Darboux transform of $G_\mu(f)$
and a Christoffel dual of $\mu\circ\hat f^*$
at the same time,
$$
  f^\dagger
  = D_t(G_\mu(h))
  = (\mu\circ\hat f^*)^*
  = G_\mu(\hat f),
$$
where the last equality is due to the definition
of the Goursat transformation, again.
\end{proof}

\begin{remark}
\label{rem:CliffordAlg}
Although we used the Hamiltonian product on $\R^3=\Im\H$
thought of as the space of imaginary quaternions,
all considerations can equally well be formulated using
the Clifford product on $\R^3$.
This fact hinges on how the symmetric parts of both,
the Hamiltonian and Clifford products,
are related to the inner product of $\R^3$,
so that the geometric content of equations such as
the Riccati equation \eqref{eq:Riccati1} can equivalently
be expressed in terms of either product,
cf.\ \cite[Sect 2]{Burstall2006} or \cite[Chap 8]{IMDG}.
However, the Clifford product can be employed in $\R^n$,
hence Theorem \ref{thm:main} holds in arbitrary codimension.
\end{remark}

\section{Weierstrass data of minimal Darboux transformations}
\label{sec:mDarboux}

Now we focus on the particular M\"obius transformation,
$$
  \iota:\R^3\cup\{\infty\}\to\R^3\cup\{\infty\}, \enspace
  x\mapsto\iota(x) := -i - 2(i+x)^{-1},
$$
which yields
the stereographic projection and its inverse
when restricted to the $2$-sphere $S^2\subset\R^3$
and $\C=\{i\}^\perp\subset\R^3$, respectively:
$$
  \iota|_{S^2} = \sigma
   \enspace{\rm and}\enspace
  \iota|_{\C j} = \sigma^{-1}.
$$
From two holomorphic (or meromorphic) functions $g,h:M^2\to\C$
on the Riemann surface $M^2$ we obtain a degenerate
Christoffel pair
\[
  h\,j,\qquad
  (hj)^* = -j\,g,
\]
cf.\ \cite[\S5.2.11]{IMDG}.
Following \cite[\S5.3.12]{IMDG}
we consider the Goursat transformation $f=G_\iota(hj)$
that is obtained as a Christoffel transform of the
(holomorphic) Gauss map
\begin{equation}\label{eq:nu}
  n = \mu\circ(hj)^*
  = \frac{1}{1+|g|^2}
    \left((1-|g|^2)\,i - 2j\,g \right) : M^2 \to S^2,
\end{equation}
by integrating
\begin{equation}\label{eq:Goursat-repr}
  df
  = 2\Re(g\omega)\,i
  + \Re((1-g^2)\omega)\,j
  + \Im((1+g^2)\omega)\,k
\end{equation}
with $\omega:=\frac{1}{2}dh$,
as obtained from \eqref{eq:Goursat}.
This identifies the Weierstrass representation
for minimal surfaces
as a special case of the Goursat transformation
for isothermic surfaces,
as annunciated above.

Thus we consider the \emph{Weierstrass data}
of the minimal surface $f$ to be one of the following
equivalent sets of data:
\begin{itemize}
\item
 the pair $(g,h)$ of the meromorphic resp.\ holomorphic
 functions $g$ and $h$;
\item
 the pair $(g,\omega)$ of the meromorphic function $g$
 and a suitable holomorphic differential $\omega$; or
\item
 the pair $(g,q)$ of the meromorphic function $g$
 and a holomorphic quadratic differential $q$
 (i.e., the ``polarization'' of $M^2$).
\end{itemize}
Here the second items of these data sets are related
by the equations
$$
  2\omega\,dg = dh\,dg = q.
$$
Note that, for a minimal surface $f$ with its Gauss map
$n$ as a Christoffel transform, our holomorphic quadratic
differential is related to its Hopf differential by
$$
  q = dh\,dg
  \simeq df\,df^*
  = 2\,(\Re\langle f_{zz},n\rangle\,dz^2
  +  n\,\Im\langle f_{zz},n\rangle\,dz^2).
$$

\subsection{Application of Theorem \ref{thm:main}}

As an application of Theorem \ref{thm:main}
we can now derive a formula that relates the Weierstrass data
of the surfaces of a minimal Darboux pair,
cf \cite[Cor 3.4]{MRT}:

\begin{corollary}
\label{fact:to_prove}
Let $M^2$ be a Riemann surface and $f,\hat{f}:M^2\to\R^3$
non-flat conformal minimal immersions,
given by Weierstrass data $(g,q)$ and $(\hat{g},\hat{q})$,
respectively.
Then $(f,\hat{f})$ is, up to translation, a Darboux pair
if and only if there is $t\in\R\setminus\{0\}$ so that
\begin{equation}\label{eq:holomorphic_Riccati}
  dg\,d\hat g = t(\hat g-g)^2\,q
   \enspace{and}\enspace
  \hat q = q.
\end{equation}
\end{corollary}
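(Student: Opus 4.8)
The plan is to use the identification, recalled in Section~\ref{sec:mDarboux}, of the Weierstrass representation as a special case of the Goursat transformation. A non-flat conformal minimal immersion $f$ with Weierstrass data $(g,q)$ is, up to translation, the Goursat transform $f=G_\iota(hj)$ of the planar (hence totally umbilic) isothermic surface $hj$, where $h$ is a holomorphic primitive of $q/dg$, $(hj)^*=-jg$, and $q=dh\,dg$; likewise $\hat f=G_\iota(\hat h\,j)$ with $(\hat h\,j)^*=-j\hat g$ and $\hat q=d\hat h\,d\hat g$. Since $G_{\iota^{-1}}\circ G_\iota=\mathrm{id}$ and, by Theorem~\ref{thm:main}, the Goursat transformation $G_\iota$ (respectively $G_{\iota^{-1}}$) intertwines the Darboux transformation $D_t$, the assertion ``$(f,\hat f)$ is, up to translation, a Darboux pair with parameter $t$'' is equivalent to ``$\hat h\,j=D_t(hj)$''. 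The ``up to translation'' freedom corresponds precisely to the freedom in the additive constants of $h$ and $\hat h$, which does not affect $(g,q)$ or $(\hat g,\hat q)$.

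Next I would make this planar Darboux transform explicit by a short quaternionic computation. For $\hat h\,j=D_t(hj)$ with respect to the Christoffel dual $(hj)^*=-jg$, the Riccati equation \eqref{eq:Riccati1} reads $d(\hat h\,j)=t\,(\hat h\,j-hj)\,(-j\,dg)\,(\hat h\,j-hj)$; using $j^2=-1$ and $jc=\bar c\,j$ for $c\in\C$, the right hand side collapses to $t\,(\hat h-h)^2\,dg\,j$, so the equation is equivalent to the scalar holomorphic Riccati equation
\[
  d\hat h=t\,(\hat h-h)^2\,dg .
\]
The same identities turn Bianchi's formula \eqref{eq:Riccati_ori} into
\[
  (\hat h\,j)^*=-jg+\tfrac1t\bigl((\hat h-h)\,j\bigr)^{-1}
   =-j\Bigl(g+\tfrac1{t(\hat h-h)}\Bigr),
\]
so the Christoffel dual produced by Bianchi permutability is again planar and forces the meromorphic function of $\hat f=G_\iota(\hat h\,j)$ to be $\hat g=g+\tfrac1{t(\hat h-h)}$; in particular $\hat f$ is minimal with Weierstrass data $(\hat g,\hat q)$, $\hat q=d\hat h\,d\hat g$, in accordance with \eqref{eq:nu}--\eqref{eq:Goursat-repr}.

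It then remains to check that the pair of relations $d\hat h=t(\hat h-h)^2\,dg$ and $\hat g-g=\tfrac1{t(\hat h-h)}$ is equivalent to \eqref{eq:holomorphic_Riccati}. Differentiating the second relation and substituting the first gives $d\hat g=dh/\bigl(t(\hat h-h)^2\bigr)$; hence $\hat q=d\hat h\,d\hat g=dh\,dg=q$ and $dg\,d\hat g=dh\,dg/\bigl(t(\hat h-h)^2\bigr)=t(\hat g-g)^2\,q$, which is \eqref{eq:holomorphic_Riccati}. Conversely, assuming \eqref{eq:holomorphic_Riccati}, non-flatness of $f$ gives $dg\not\equiv0$, so the first equation of \eqref{eq:holomorphic_Riccati} may be divided by $dg$ to yield $d\hat g=t(\hat g-g)^2\,dh$; then $\hat q=q$ forces $d\hat h=dg/\bigl(t(\hat g-g)^2\bigr)$, and a direct computation shows $d\bigl(\hat h-h-\tfrac1{t(\hat g-g)}\bigr)=0$. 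Adjusting the additive constant of $\hat h$ --- that is, translating $\hat f$ --- we recover $\hat h-h=\tfrac1{t(\hat g-g)}$, hence both relations above, hence, by the quaternionic computation run in reverse together with Theorem~\ref{thm:main}, that $(f,\hat f)$ is, up to translation, a Darboux pair with parameter $t$.

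The main obstacle is organisational rather than computational: one must keep track of the Christoffel duals entering the successive applications of Bianchi's permutability theorem and of M\"obius invariance inside Theorem~\ref{thm:main} --- here the planar duals $-jg$ and $-j\hat g$ --- and verify that the residual freedom in those duals, equivalently the additive constants of $h$ and $\hat h$, is exactly the ``up to translation'' ambiguity in the statement. Once this is in place, the quaternionic identities of the second paragraph and the scalar manipulations of the third are routine.
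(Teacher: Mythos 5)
Your forward direction is sound and is essentially the paper's argument in slightly different clothing: the paper encodes \eqref{eq:holomorphic_Riccati} as the quaternionic Riccati equation for $-j\hat g=D_t(-jg)$, while you work with the dual pair $\hat h\,j=D_t(hj)$ together with Bianchi's formula \eqref{eq:Riccati_ori}; the two are equivalent, your quaternionic identities are correct, and your scalar manipulations in the third paragraph (including the verification that $\hat q=q$ comes along automatically) are a useful explicit supplement to what the paper leaves implicit.

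The converse direction, however, has a genuine gap at the very first step. You assert that ``$(f,\hat f)$ is, up to translation, a Darboux pair'' is equivalent to ``$\hat h\,j=D_t(hj)$'' simply because $G_{\iota^{-1}}\circ G_\iota=\mathrm{id}$ and Theorem \ref{thm:main} intertwines $G_\iota$ with $D_t$. But to run the permutability diagram backwards from a given minimal Darboux pair you must apply $G_\iota(\hat f)=(\iota\circ\hat f^*)^*$ with the \emph{specific} Christoffel dual that Theorem \ref{thm:main} uses, namely the Bianchi dual $\hat f^*=n+\frac1t(\hat f-f)^{-1}$. For $(\iota\circ\hat f^*)^*$ to be the planar surface $\hat h\,j$ carrying the Weierstrass data $(\hat g,\hat q)$ of $\hat f$, you need $\hat f^*$ to equal the \emph{unit-sphere} Gauss map $\hat n$ exactly; uniqueness of the Christoffel dual only gives $\hat f^*=\lambda\hat n+c$, and $\iota\circ(\lambda\hat n+c)$ is not planar for general $\lambda,c$. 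Pinning down $\lambda=1$, $c=0$ is equivalent to the identity $\frac1t=2\langle\hat f-f,n\rangle$, which holds precisely because $\hat f$ is minimal and $f^*=n$ has mean curvature $1$ --- this is the content of Lemma \ref{thm:mean_curvature} with $H^*=1$ and $\hat H=0$, which the paper invokes at exactly this point to conclude $\hat f^*=n+2\langle\hat f-f,n\rangle(\hat f-f)^{-1}=\hat n$. This is a substantive geometric constraint relating $t$ to the pair, not a matter of adjusting the additive constants of $h$ and $\hat h$, so it cannot be absorbed into the ``up to translation'' bookkeeping you describe in your final paragraph; without it, the implication from ``Darboux pair'' to $\hat h\,j=D_t(hj)$, and hence to \eqref{eq:holomorphic_Riccati}, does not follow.
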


Here we consider $q$ and $\hat q$ as $\C$-valued holomorphic
quadratic differentials.

Note that, with $\omega:=\frac{q}{2dg}$,
the first equation of \eqref{eq:holomorphic_Riccati}
yields the Riccati equation
$$
  d\hat g = 2t(\hat g-g)^2\omega
$$
of \cite[(3.11)]{MRT},
while the transformation formula \cite[(3.12)]{MRT}
for the Weierstrass data is obtained using $\hat q=q$:
$$
  \hat\omega = \frac{\hat q}{2d\hat g}
  = \frac{dg}{2t(\hat g-g)^2}.
$$

A lemma will be helpful to prove the theorem,
cf \cite[(58)]{hjp97} or \cite[\S5.4.15]{IMDG}:

\begin{lemma}
\label{thm:mean_curvature}
Let $f:M^2\to\R^3$ be isothermic,
and let $f^*$ and $\hat f=D_t(f)$ denote a Christoffel
and a Darboux transform of $f$, respectively.
Then
$$
  0 = \hat H\,|\hat f-f|^2
  - 2\langle\hat f-f,n\rangle
  + \frac{1}{t}\,H^*,
$$
where $n$ denotes a Gauss map of $f$, and the Gauss maps
$n^*$ and $\hat n$ are chosen so that the holomorphic
quadratic differentials $\hat q\simeq q^*\simeq q$.
\end{lemma}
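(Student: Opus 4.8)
The plan is to read off $\hat H$ from the Laplacian of $\hat f$ in conformal curvature line coordinates. Choose such coordinates $(u,v)$ for $f$; since $\hat f=D_t(f)$ is a Ribaucour transform with conformally equivalent metric, the same $(u,v)$ are conformal curvature line coordinates for $\hat f$, with $|d\hat f|^2=\hat E\,(du^2+dv^2)$ and $\hat E=t^2|v|^4/E$, where $v:=\hat f-f$. Normalising the Christoffel dual by $df^*=-\frac1E(f_u\,du-f_v\,dv)$ one has $n^*=-n$ and, in conformal coordinates, $f^*_{uu}+f^*_{vv}=-\frac{2H^*}{E}\,n$ (this is normal to $f^*$, with $H^*$ the mean curvature of $f^*$). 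The Gauss map of $\hat f$ compatible with $\hat q\simeq q^*\simeq q$ is then $\hat n=\frac{v\,n\,v}{|v|^2}=n-\frac{2\langle v,n\rangle}{|v|^2}\,v$: this follows from $d\hat f\,d\hat f^*=v\,(df^*\,df)\,v^{-1}$, or directly from $\hat n=\hat f_u\hat f_v/\hat E$ using $\hat f_u=t\,v\,f^*_u\,v$. In particular $\langle v,\hat n\rangle=-\langle v,n\rangle$ and $\langle n,\hat n\rangle=1-\frac{2\langle v,n\rangle^2}{|v|^2}$, and $\hat H=\frac1{2\hat E}\,\langle\hat f_{uu}+\hat f_{vv},\hat n\rangle$.

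Differentiating $\hat f_u=t\,v\,f^*_u\,v$, $\hat f_v=t\,v\,f^*_v\,v$ once more, the Leibniz rule gives
\[
  \hat f_{uu}+\hat f_{vv}=t\,v\,(f^*_{uu}+f^*_{vv})\,v+t\,(Pv+v\bar P),\qquad P:=v_uf^*_u+v_vf^*_v,
\]
with $v_u=\hat f_u-f_u$, $v_v=\hat f_v-f_v$, and $\bar P=f^*_uv_u+f^*_vv_v$ the quaternionic conjugate of $P$. Substituting $f^*_{uu}+f^*_{vv}=-\frac{2H^*}{E}\,n$, the first term equals $-\frac{2tH^*}{E}\,v\,n\,v=-\frac{2tH^*|v|^2}{E}\,\hat n$, so after dividing by $2\hat E$ it contributes exactly $-\frac{H^*}{t|v|^2}$ to $\hat H$. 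Observe that $H$, the mean curvature of $f$, never enters, since only the mean curvature vector of $f^*$ is used.

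For the remaining term, split $P=p_0+\vec p$ with $p_0\in\R$ and $\vec p\in\Im\H$, so that $Pv+v\bar P=2p_0\,v+2\,\vec p\times v$. Substituting $\hat f_u=-\frac tE\,v\,f_u\,v$, $\hat f_v=\frac tE\,v\,f_v\,v$, using the identity $v\,w\,v=|v|^2w-2\langle v,w\rangle v$ for $w\in\Im\H$, and decomposing $v$ in the moving frame $\{f_u,f_v,n\}$ (so that $\langle v,f_u\rangle^2+\langle v,f_v\rangle^2=E(|v|^2-\langle v,n\rangle^2)$), a short computation gives $p_0=-\frac{2t\langle v,n\rangle^2}{E}$ and $\vec p=\frac{2t\langle v,n\rangle}{E}\,(v\times n)$. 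Together with $(v\times n)\times v=|v|^2n-\langle v,n\rangle v$ and the values of $\langle v,\hat n\rangle$, $\langle n,\hat n\rangle$ above, this yields $\langle Pv+v\bar P,\hat n\rangle=\frac{4t\langle v,n\rangle|v|^2}{E}$. Hence $\langle\hat f_{uu}+\hat f_{vv},\hat n\rangle=\frac{2t|v|^2}{E}\bigl(2t\langle v,n\rangle-H^*\bigr)$, and dividing by $2\hat E=2t^2|v|^4/E$ gives $\hat H\,|v|^2=2\langle v,n\rangle-\frac1tH^*$, which is the assertion.

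The only genuine computation is the frame evaluation of $p_0$ and $\vec p$, and I expect this to be the sole obstacle; it stays painless because we use the Laplacian rather than the shape operator, and because $v\,n\,v$ is a positive multiple of $\hat n$, so the $H^*$-dependence of $\hat H$ is already visible in the first term of $\hat f_{uu}+\hat f_{vv}$. One may also avoid introducing $P$ by pairing the Leibniz expansion directly against $\hat n=v\,n\,v/|v|^2$, using $\langle v\,x\,v,\,v\,y\,v\rangle=|v|^4\langle x,y\rangle$; the same terms appear.
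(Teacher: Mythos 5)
Your proof is correct, and it reaches the identity by a slightly different computational route than the paper. Both arguments begin identically: fix $n^*=-n$ and $\hat n=v\,n\,v/|v|^2=-(\hat f-f)^{-1}n(\hat f-f)$ via the condition $\hat q\simeq q^*\simeq q$ (your derivation from $d\hat f\,d\hat f^*=v\,(df^*df)\,v^{-1}$ is exactly the paper's), and then extract $\hat H$ directly from the Riccati equation. The difference is in what gets differentiated: the paper differentiates the normal, obtaining a closed formula for $d\hat n$ in terms of $dn^*$, $df^*$ and $df$, computes the full second fundamental form $\Re d\hat f\,d\hat n$, and takes the trace against $|d\hat f|^2=t^2|v|^4|df^*|^2$, discarding the trace-free part $\Re q^*$; you instead differentiate $d\hat f$ once more, pair the Laplacian $\hat f_{uu}+\hat f_{vv}=t\,v(f^*_{uu}+f^*_{vv})v+t(Pv+v\bar P)$ against $\hat n$, and never touch $d\hat n$. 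Your route is arguably leaner for this particular statement --- the $H^*$-term drops out immediately because $v(f^*_{uu}+f^*_{vv})v$ is a multiple of $\hat n$, and the only real work is the frame evaluation of $P$ (which I checked: $p_0=-\tfrac{2t\langle v,n\rangle^2}{E}$, $\vec p=\tfrac{2t\langle v,n\rangle}{E}v\times n$, and $\langle Pv+v\bar P,\hat n\rangle=\tfrac{4t\langle v,n\rangle|v|^2}{E}$ are all correct) --- whereas the paper's formula for $d\hat n$ costs more up front but yields the whole shape operator of $\hat f$, not just its trace. One cosmetic point: you only need $(u,v)$ to be conformal for $\hat f$ (which follows at once from the Riccati equation), not that they are curvature line coordinates for $\hat f$, so that remark can be weakened.
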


\begin{proof}
Given a Gauss map $n$ of $f$, there is a consistent choice
of Gauss maps
$$
  n^* := -n
   \enspace{\rm and}\enspace
  \hat n := -(\hat f-f)^{-1}n(\hat f-f)
$$
so that $\hat q\simeq q^*\simeq q$ when identifying
$\hat n\simeq n^*\simeq n$ as imaginary units of the
normal bundles of $\hat f$, $f^*$ and $f$ in $\H$,
as complex rank $1$ bundles:
using $f^{**}=f$,
$$
  q^* = df^*df = \overline{df\,df^*} = \bar q,
   \enspace{\rm hence}\enspace
  q^* \simeq q;
$$
and using the Riccati equations and Bianchi's permutability
for the Christoffel and Darboux transformations,
$$
  d\hat f = t(\hat f-f)df^*(\hat f-f)
   \enspace{\rm and}\enspace
  \hat f^* = f^* + \frac{1}{t}(\hat f-f)^{-1},
$$
we deduce that
$$
  \hat q
  = d\!\hat f\,d\!\hat f^*
  = (\hat f-f)df^*df(\hat f-f)^{-1}
  = (\hat f-f)\bar q(\hat f-f)^{-1},
   \enspace{\rm hence}\enspace
  \hat q \simeq q.
$$
Then direct computation yields with $n^*=-n$
$$
  d\hat n = (\hat f-f)^{-1}\{
   dn^* + 2\langle\hat f-f,n\rangle
    (t\,df^* - \frac{1}{|\hat f-f|^2}\,df)
  \}(\hat f-f),
$$
hence the second fundamental form of $\hat f$ with
$\hat n$ as a normal field computes to
$$
  \Re d\!\hat f\,d\hat n
  = -t\,|\hat f-f|^2\{ \Re df^*dn^*
    - 2t\langle\hat f-f,n\rangle\{
      t|df^*|^2+\frac{1}{|\hat f-f|^2}\,\Re q^* \} \}
$$
and taking trace with respect to
$|d\!\hat f|^2=t^2|\hat f-f|^4|df^*|^2$
yields the claim since $q^*$ is a quadratic
differential so that $\Re q^*$ is trace free.
\end{proof}

Now Corollary \ref{fact:to_prove} follows directly
from our Permutability Theorem \ref{thm:main}:

\begin{proof}
First assume that \eqref{eq:holomorphic_Riccati} holds
for some $t\in\R$,
that is,
with $d(-jg)^*=\frac{q}{dg}\,j$ we have
$$
  d(-j\hat g)
  = t\,(-j\hat g+jg)\,d(-jg)^*(-j\hat g+jg)
$$
showing that $-j\hat g=D_t(-jg)$:
hence, wheeling out our Permutability Theorem \ref{thm:main},
we learn that $f$ and $\hat f$ form a Darboux pair when
appropriately positioned in $\R^3$.

To see the converse we employ Lemma \ref{thm:mean_curvature}:
with $H^*=1$ and $\hat H=0$ this yields
$$
  \hat f^*
  = f^* + \frac{1}{t}(\hat f-f)^{-1}
  = n + 2\langle\hat f-f,n\rangle(\hat f-f)^{-1}
  = \hat n,
$$
Hence, unleashing our Permutability Theorem \ref{thm:main}
gaain we obtain the result, as above.
\end{proof}

As a direct consequence of \eqref{eq:GD} from
Thm \ref{thm:main} and Cor \ref{fact:to_prove}
we obtain:

\begin{corollary}
Let $f,\hat{f}:M^2\to\R^3$ form a minimal Darboux pair,
with Gauss maps $n$ resp $\hat n$.
Then
\[
  \hat{f} = f + \frac{1}{t}(\hat{n}-n)^{-1}.
\]
\end{corollary}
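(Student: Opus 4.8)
The plan is to read the formula straight off equation \eqref{eq:GD}, applied to the degenerate isothermic surface that underlies the Weierstrass representation. Recall from \S\ref{sec:mDarboux} that the minimal surface with Weierstrass data $(g,q)$ is realised as the Goursat transform $f=G_\iota(hj)$, whose Gauss map is $n=\iota\circ(hj)^*=\iota\circ(-jg)$ as in \eqref{eq:nu}; likewise $\hat f=G_\iota(\hat h\hat j)$ with Gauss map $\hat n=\iota\circ(\hat h\hat j)^*=\iota\circ(-j\hat g)$. The first step is to invoke Corollary \ref{fact:to_prove}: since $(f,\hat f)$ is, up to translation, a minimal Darboux pair, \eqref{eq:holomorphic_Riccati} holds, and --- as its proof shows --- this is exactly the assertion that $-j\hat g=D_t(-jg)$.

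Next I would feed this into Bianchi's permutability theorem with base surface $-jg$: its Christoffel dual is $(-jg)^*=hj$ and its Darboux transform is $-j\hat g=D_t(-jg)$, so \eqref{eq:Riccati_ori} produces a Christoffel dual of $-j\hat g$ that is at the same time a $D_t$-transform of $hj$; with the normalisation $\hat q=q$ this Christoffel dual is $\hat h\hat j$, whence $\hat h\hat j=D_t(hj)$. Now Theorem \ref{thm:main} applies to the isothermic surface $hj$, its Darboux transform $\hat h\hat j=D_t(hj)$, and its Goursat transform $G_\iota(hj)=f$, and \eqref{eq:GD} reads
\[
  G_\iota(\hat h\hat j)
   = G_\iota(hj)
   + \frac1t\bigl(\iota\circ(\hat h\hat j)^*-\iota\circ(hj)^*\bigr)^{-1}.
\]
Since $G_\iota(\hat h\hat j)=\hat f$ by the definition of the Weierstrass representation of $\hat f$, $G_\iota(hj)=f$, and $\iota\circ(hj)^*=n$, $\iota\circ(\hat h\hat j)^*=\hat n$ as in \eqref{eq:nu}, substituting the four terms gives $\hat f=f+\frac1t(\hat n-n)^{-1}$, which is the claim; note that no quaternionic inversion is even needed, since \eqref{eq:GD} already carries the inverse.

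I expect the only real obstacle to be bookkeeping: one must take the base of the permutability square to be $hj$ rather than its dual $-jg$, and check that $D_t(hj)$ genuinely carries the Weierstrass data $(\hat g,q)$ of $\hat f$ --- which is where the normalisation $\hat q=q$ from Corollary \ref{fact:to_prove} is used. A shorter route sidesteps the Goursat picture entirely: for a minimal surface the Gauss map is a Christoffel dual, so $f^*=n$ and (as established in the proof of Corollary \ref{fact:to_prove}) $\hat f^*=\hat n$; then Bianchi's formula \eqref{eq:Riccati_ori} becomes $\hat n=n+\frac1t(\hat f-f)^{-1}$, and inverting this quaternionic identity yields $\hat f-f=\frac1t(\hat n-n)^{-1}$ at once.
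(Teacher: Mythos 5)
Your proposal is correct and follows the paper's (unwritten) derivation exactly: the corollary is stated there as a direct consequence of \eqref{eq:GD} and Corollary \ref{fact:to_prove}, which is precisely your first route --- applying Theorem \ref{thm:main} to the degenerate base surface $hj$ with $\mu=\iota$ (modulo the harmless notational slip $\hat h\hat j$ for $\hat h\,j$). Your alternative second route, reading the identity off \eqref{eq:Riccati_ori} together with $f^*=n$ and $\hat f^*=\hat n$ from the proof of Corollary \ref{fact:to_prove} and inverting, is equally valid and neatly sidesteps the translation/normalisation bookkeeping of the Goursat picture.
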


\subsection{Curved flats and flat fronts}
\label{sec:flats}

By Corollary \ref{fact:to_prove},
resp \cite[Cor 3.4]{MRT},
the Gauss maps $n$ and $\hat n$ of a minimal Darboux
pair $(f,\hat f)$ in Euclidean space form a (degenerate)
Darboux pair themselves, hence a curved flat in the space
of point pairs in the $2$-sphere,
see \cite{FerusPedit} or \cite[\S3.3.2 or \S5.5.20]{IMDG}.
Consequently, interpreting the interior of the $2$-sphere
as a Poincar\'e ball model of hyperbolic space,
$(n,\hat n)$ qualifies as the pair of hyperbolic Gauss maps
of a parallel family of flat fronts,
see \cite{KUY}.
More precisely, linearization of of the Riccati equations
(\ref{eq:holomorphic_Riccati}) for $\hat g$ reads
$$
  0 = d\left({\hat g\hat a\atop\hat a}\right)
  + 2t\,\left( {g\omega\atop\hfill\omega}\,
     {-g\omega g\atop\hfill-\omega g} \right)
    \left({\hat g\hat a\atop\hat a}\right)
  = d\left({\hat g\hat a\atop\hat a}\right)
  + t\,\left({ga\atop a}\right)\,a^{-1}\omega(\hat g-g)\hat a,
$$
where $d\hat a+2t\omega(\hat g-g)\hat a=0$,
and similary for $dg$.
Thus we obtain the curved flat system
$$
  d\left({ga\atop a}\,{\hat g\hat a\atop\hat a}\right)
  + t\,\left({ga\atop a}\,{\hat g\hat a\atop\hat a}\right)
  \left({0\atop\hat a^{-1}\hat\omega(g-\hat g)a}\,
   {a^{-1}\omega(\hat g-g)\hat a\atop 0}\right) = 0
$$
of the Weierstrass type representation for flat fronts
in hyperbolic space.

Thus we recover the relation between
minimal Darboux pairs in Euclidean space and
(parallel families of) flat fronts in hyperbolic space
of \cite[Thms 3.2 \& 3.3]{MRT},
see also \cite[Sect 3]{bjr10}:

\begin{corollary}
\label{thm:curved_flat}
Any minimal Darboux pair $(f,\hat f)$,
with $(n,\hat n)$ as its Darboux pair of Gauss maps,
determines a parallel family of flat fronts in hyperbolic
space, with hyperbolic Gauss maps $n$ and $\hat n$.\par
Conversely, the hyperbolic Gauss maps of a flat front
in hyperbolic space form a curved flat in the $2$-sphere,
hence determine a minimal Darboux pair
in Euclidean space.
\end{corollary}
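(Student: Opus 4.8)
The plan is to splice together three correspondences, each already available: the formula of Corollary~\ref{fact:to_prove} for the Gauss maps of a minimal Darboux pair, the classical dictionary between Darboux pairs of isothermic surfaces and curved flats in the space of point pairs, and the Weierstrass-type representation of flat fronts in hyperbolic $3$-space by their pair of hyperbolic Gauss maps. First I would note that, for a minimal Darboux pair $(f,\hat f)$ with Weierstrass data $(g,q)$ and $(\hat g,q)$, Corollary~\ref{fact:to_prove} combined with the M\"obius invariance \eqref{eq:mu_D} of the Darboux transformation yields $\hat n=D_t(n)$ for the Gauss maps; since $n$ and $\hat n$ take values in a round sphere, viewed as an isothermic surface for which every conformal parameter is a curvature line parameter, this is a \emph{degenerate} Darboux pair of isothermic surfaces.

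Next I would invoke the correspondence of \cite{bjpp97}, see also \cite[\S3.3.2]{IMDG}: a Darboux pair of isothermic surfaces in the conformal $3$-sphere is the same datum as a curved flat in the space of point pairs. Restricted to $(n,\hat n)$, this becomes a curved flat in the space of point pairs of $S^2$. Interpreting $S^2$ as the ideal boundary of a Poincar\'e ball model of hyperbolic $3$-space, a point pair with distinct components is an oriented geodesic, and a curved flat of such pairs is exactly the pair of hyperbolic Gauss maps of a parallel family of flat fronts, the spectral parameter indexing the family; this is the representation of \cite{KUY}, cf.\ \cite[Sect~3]{bjr10}. Indeed, the linearization of the Riccati equation \eqref{eq:holomorphic_Riccati} carried out just above the statement already produces the curved flat system of Weierstrass type for flat fronts, which exhibits the flat fronts and certifies that $n$ and $\hat n$ are their hyperbolic Gauss maps; this proves the first assertion.

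For the converse I would run the same chain in reverse. Starting from a flat front in hyperbolic $3$-space, its two hyperbolic Gauss maps $n,\hat n:M^2\to S^2$ form a curved flat in the space of point pairs of $S^2$ by \cite{KUY}; by \cite{bjpp97} this curved flat is a (degenerate) Darboux pair of isothermic surfaces; and then the ``if'' half of Corollary~\ref{fact:to_prove} --- i.e.\ Theorem~\ref{thm:main} applied as in the proof of that corollary, with the M\"obius transformation $\iota$ --- reconstructs a minimal Darboux pair $(f,\hat f)$ in $\R^3$ with $n$ and $\hat n$ as Gauss maps, namely the Christoffel duals of $n$ and $\hat n$ against the holomorphic quadratic differential $q=\frac{dg\,d\hat g}{t(\hat g-g)^2}$.

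The argument is therefore assembly rather than fresh computation, and the main obstacle is exactly that assembly: one must fix normalizations so that ``curved flat in the space of point pairs of $S^2$'' matches, on the nose, the flatness \emph{and} front conditions of the associated map into hyperbolic $3$-space, and so that the parallel family is parametrized compatibly with the Darboux parameter $t$. The one genuinely analytic point is that the quadratic differential $q$ appearing in the converse is holomorphic; this is forced by the curved flat equations, equivalently by the first equation of \eqref{eq:holomorphic_Riccati}, but it deserves to be checked explicitly.
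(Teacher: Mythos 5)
Your proposal is correct and follows essentially the same route as the paper: the text preceding the corollary assembles exactly the same chain --- Corollary~\ref{fact:to_prove} makes $(n,\hat n)$ a degenerate Darboux pair, the dictionary of \cite{bjpp97} turns this into a curved flat in the space of point pairs of $S^2$, and the linearization of \eqref{eq:holomorphic_Riccati} exhibits the curved flat system of the Weierstrass-type representation for flat fronts of \cite{KUY}, with the converse obtained by reversing the chain. Your added remarks on normalizations and on the holomorphicity of $q$ are reasonable care, but no new ingredient is needed beyond what the paper uses.
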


Note that the above linearization of the Riccati equations
can equally be performed for maps into $\R^3$ or $\R^n$
by using quaternions or an appropriate Clifford algebra,
see \cite[\S5.4.1 and \S8.7.2]{IMDG}
or \cite[Sect 3.2]{Burstall2006}.

\begin{acknowledgements}
The second author expresses his gratitude to the members
of the Institute of Discrete Mathematics and Geometry
at TU Wien for their hospitality during his stay in 2015/16. 
\end{acknowledgements}

\end{document}